\shorttitle}
\@nx\MakeUppercase{\the\toks@}}
\patchcmd\@settitle{\uppercasenonmath\@title}{\Large}{}{}
\authors}
\newtheorem{theorem}{Theorem}[section]
\newtheorem{corollary}[theorem]{Corollary}
\newtheorem{proposition}[theorem]{Proposition}
\newtheorem{lemma}[theorem]{Lemma}
\theoremstyle{remark}
\newtheorem*{remark}{\textup{\textbf{Remark}}}
\numberwithin{equation}{section}
\DeclareMathAlphabet\EuScript{U}{eus}{m}{n}
\DeclareMathAlphabet\EuScriptBold{U}{eus}{b}{n}
\DeclareMathAlphabet\Eurm{U}{eur}{m}{n}
\DeclareMathAlphabet\Eurb{U}{eur}{b}{n}
\newcommand{\mathcalb}{\EuScriptBold}
\def\R{\mathbb{R}}
\def\C{\mathbb{C}}
\def\H{\mathcal{H}}
\def\BH{\mathcalb{B}(\mathcal{H})}
\def\N{\mathbb{N}}
\def\R{\mathbb{R}}
\def\C{\mathbb{C}}
\def\D{\mathbb{D}}
\def\wq{\omega_q}
\newcommand{\doi}[1]{\url{https://doi.org/#1}}
\newcommand\norm[1]{\left\lVert#1\right\rVert}
\newcommand\abs[1]{\lvert#1\rvert}
\newcommand\scal[2]{\left\langle #1,#2\right\rangle}
\begin{document}

\keywords{$q$-numerical radius, $q$-numerical range, rank-one operators, Buzano inequality, Cauchy–Schwarz inequality}
		
\subjclass[2020]{Primary 47A12, 47A07; Secondary 47B99}

\date{\today}
		
\title[$q$-numerical radius of rank-one operators and the generalized Buzano inequality]
{$q$-numerical radius of rank-one operators\\and the generalized Buzano inequality}

\author[D.\ Den\v ci\' c, H.\ Stankovi\' c,  M.\ Krsti\' c and I.\ Damnjanovi\' c]{{Du\v san Den\v ci\' c}$^{1}$, {Hranislav Stankovi\' c}$^{2}$, {Mihailo Krsti\' c}$^{3}$ and {Ivan Damnjanovi\' c}$^{2,4}$}
		
\address{$^{[1]}$ Faculty of Agriculture, University of Belgrade, Nemanjina 6, Belgrade, 11080, Serbia.}
\email{\url{dusan.dencic@agrif.bg.ac.rs}}

\address{$^{[2]}$ Faculty of Electronic Engineering, University of Ni\v{s}, Aleksandra Medvedeva 4, Ni\v{s}, 18104, Serbia.}
\email{\url{hranislav.stankovic@elfak.ni.ac.rs}, \url{ivan.damnjanovic@elfak.ni.ac.rs}}

\address{$^{[3]}$ Faculty of Mathematics, University of Belgrade, Studentski trg 16, Belgrade, 11000, Serbia.}
\email{\url{mihailo.krstic@matf.bg.ac.rs}}

\address{$^{[4]}$ Diffine LLC, 3681 Villa Terrace, San Diego, CA, 92104, USA.}

\begin{abstract}
Here, we study the \(q\)-numerical radius of rank-one operators on a Hilbert space \(\mathcal{H}\). More precisely, for \(q \in [0,1]\) and \(a, b \in \mathcal{H}\), we establish the formula  
\[
	\wq(a \otimes b) = \frac{1}{2}\left(\|a\|\|b\| + q|\langle a, b \rangle| + \sqrt{1-q^2}\sqrt{\|a\|^2\|b\|^2 - |\langle a, b \rangle|^2}\right),
\]
which represents a generalization of the well-known formula for the numerical radius of a rank-one operator in a Hilbert space, obtained by setting \(q = 1\). As a corollary, we also derive a generalization of the classical Buzano inequality.
\end{abstract}

\maketitle

\section{Introduction}

Throughout the paper, we consider all Hilbert spaces to be complex. We use $\BH$ to denote the Banach algebra of all bounded linear operators on the Hilbert space $\H$. We also let $\D$ denote the open unit disk in $\mathbb{C}$, that is, 
\[
    \D = \{z \in \mathbb{C} \colon |z| < 1\}.
\]

For a given Hilbert space $\H$, the numerical range of $A \in \BH$ is defined as
\[
\mathcal{W}(A) = \{\langle Ax, x \rangle \colon x \in \H,\, \|x\| = 1\},
\]
while the numerical radius is
\[
	\omega(A) = \sup_{w \in \mathcal{W}(A)} |w|.
\]
Similarly, for any $q \in \overline{\D}$, the $q$-numerical range is defined as
\[
	\mathcal{W}_q(A) = \{\langle Ax, y \rangle \colon x, y \in \H,\, \|x\| = \|y\| = 1,\, \langle x, y \rangle = q\},
\]
while the $q$-numerical radius is
\[
	\omega_q(A) = \sup_{w \in \mathcal{W}_q(A)} |w|.
\]
Note that the $q$-numerical radius generalizes the numerical radius, as $$\omega_q(A) = \omega(A)$$when $|q| = 1$. This follows from the fact that equality must hold in the Cauchy--Schwarz inequality
\[
|q| = |\langle x,y\rangle| \leqslant \|x\|\|y\| = 1,
\]
provided $|q| = 1$. In that case, we have $y = \lambda x$ for some $\lambda \in \C$ with $|\lambda|=1$, hence $|\langle Ax,y\rangle| = |\langle Ax,x\rangle|$.

Clearly, if $\dim(\H)=1$, then $\mathcal{W}_q(A)$ is nonempty if and only if $|q| = 1$, whereas if $\dim(\H)\geqslant 2$, it is easy to see that $\mathcal{W}_q(A)$ is always nonempty. Therefore, in what follows, we restrict our attention to Hilbert spaces of dimension at least two. The following lemma provides elementary properties of the $q$-numerical radius and it is easily derived from \cite[Proposition 3.1]{GauWu}; see also \cite[Lemma~2.2]{q1}.

\begin{lemma}\label{elementary_lemma}
For $A,B\in\BH$ and $\lambda\in\mathbb{C}, q\in\overline{\D}$, we have the following:
\begin{enumerate}[label=\textbf{(\roman*)}]
    \item $\omega_q(\lambda A)=\abs{\lambda} \, \omega_q(A)$;
    \item $\omega_q(A+B)\leqslant \omega_q(A)+\omega_q(B)$;
    \item $\omega_q(U^*AU)=\omega_q(A)$, where $U\in\BH$ is a unitary operator;
    \item $\omega_{\lambda q}(A)=\omega_q(A)$ for all $\lambda\in\C$ with $|\lambda|=1$.
\end{enumerate}
\end{lemma}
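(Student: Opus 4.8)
The plan is to derive all four items directly from the definition, exploiting the fact that the admissible set
\[
\mathcal{A}_q = \{(x,y) \in \H \times \H : \|x\| = \|y\| = 1,\ \langle x, y\rangle = q\}
\]
depends only on $q$ (and, for (iii)--(iv), transforms in a controlled way), while the quantity $|\langle Ax, y\rangle|$ being maximized is where the operator enters. Since $\dim(\H) \geqslant 2$, each relevant admissible set is nonempty, so every supremum below is genuinely taken over a nonempty set.

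For (i), I would note that $\langle (\lambda A)x, y\rangle = \lambda \langle Ax, y\rangle$, so $|\langle (\lambda A)x, y\rangle| = |\lambda|\,|\langle Ax, y\rangle|$ for every $(x,y) \in \mathcal{A}_q$; taking the supremum over $\mathcal{A}_q$ and pulling out the constant $|\lambda|$ yields $\omega_q(\lambda A) = |\lambda|\,\omega_q(A)$. For (ii), for each fixed $(x,y) \in \mathcal{A}_q$ the triangle inequality for the modulus gives
\[
|\langle (A+B)x, y\rangle| \leqslant |\langle Ax, y\rangle| + |\langle Bx, y\rangle| \leqslant \omega_q(A) + \omega_q(B),
\]
and taking the supremum over $(x,y) \in \mathcal{A}_q$ on the left produces the claimed subadditivity.

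Items (iii) and (iv) I would handle by exhibiting norm- and inner-product-preserving bijections of the admissible set. For (iii), the substitution $(x,y) \mapsto (Ux, Uy)$ maps $\mathcal{A}_q$ bijectively onto itself: unitarity gives $\|Ux\| = \|Uy\| = 1$ and $\langle Ux, Uy\rangle = \langle x, y\rangle = q$, with inverse realized by $U^*$. Since $\langle U^*AU x, y\rangle = \langle A(Ux), Uy\rangle$, the two families $\{|\langle U^*AUx, y\rangle| : (x,y)\in\mathcal{A}_q\}$ and $\{|\langle Ax', y'\rangle| : (x',y')\in\mathcal{A}_q\}$ coincide, so their suprema agree. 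For (iv), take $\lambda \in \C$ with $|\lambda| = 1$ and use the substitution $(x,y)\mapsto(\lambda x, y)$: it preserves norms and sends $\mathcal{A}_q$ bijectively onto $\mathcal{A}_{\lambda q}$, since $\langle \lambda x, y\rangle = \lambda q$. Because $|\langle A(\lambda x), y\rangle| = |\lambda|\,|\langle Ax, y\rangle| = |\langle Ax, y\rangle|$, this bijection preserves the maximized quantity, giving $\omega_{\lambda q}(A) = \omega_q(A)$.

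I do not expect a genuine obstacle here: the entire content is bookkeeping with the constraints defining $\mathcal{A}_q$. The only points requiring a little care are checking that each substitution truly preserves both defining conditions (unit norm and the prescribed value of $\langle x, y\rangle$) and is a bijection, so that passing to the supremum is legitimate in both directions; for (iv) one must also track the inner-product convention to confirm that scaling the \emph{first} argument by $\lambda$ is what turns $q$ into $\lambda q$.
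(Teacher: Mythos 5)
Your proof is correct: each item reduces to checking that the relevant substitution preserves the admissible set $\mathcal{A}_q$, and you track the one delicate point properly, namely that under the paper's convention (inner product linear in the first argument, as seen in $\langle (a\otimes b)x,y\rangle = \langle x,a\rangle\langle b,y\rangle$) the map $(x,y)\mapsto(\lambda x,y)$ carries $\mathcal{A}_q$ bijectively onto $\mathcal{A}_{\lambda q}$ while leaving $|\langle Ax,y\rangle|$ unchanged. The paper gives no proof of its own, deferring to \cite[Proposition 3.1]{GauWu} and \cite[Lemma~2.2]{q1}, and your direct verification is exactly the standard elementary argument underlying those citations.
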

By Lemma \ref{elementary_lemma}(iv), it is enough to restrict ourselves to the case $q \in [0, 1]$. The next lemma will also be crucial in the sequel. Its proof can be found in \cite[Lemma~3.1]{q1}.

\begin{lemma}
For any $A\in\BH$ and $q\in\overline{\D}$, we have 
\[
    \mathcal{W}_q(A) = \left\{ q\langle Ay,y\rangle + \sqrt{1-|q|^2} \, \langle At,y\rangle \,\colon \|y\| = \|t\| = 1,\, \langle t,y\rangle = 0 \right\}.
\]
Also, 
\begin{equation}\label{q_num_radius_redef}
    \omega_q(A) = \sup\left\{ |q| \, |\langle Ay,y\rangle| + \sqrt{1-|q|^2} \, |\langle At,y\rangle| \,\colon \|y\| = \|t\| = 1,\, \langle t,y\rangle = 0 \right\}.
\end{equation}
\end{lemma}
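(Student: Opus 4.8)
The plan is to reparametrize the pair of unit vectors $(x,y)$ from the definition of $\mathcal{W}_q(A)$ by resolving $x$ along $y$ and its orthogonal complement. Fix a unit vector $y$ and suppose $x$ is any unit vector with $\langle x, y\rangle = q$. Writing $x = \alpha y + z$ with $z \perp y$, the constraint $\langle x, y\rangle = q$ forces $\alpha = q$, while $\|x\|^2 = |q|^2 + \|z\|^2 = 1$ gives $\|z\| = \sqrt{1-|q|^2}$. Hence $z = \sqrt{1-|q|^2}\, t$ for a suitable unit vector $t \perp y$; absorbing the phase of $z$ into $t$ is harmless, since $t$ will be allowed to range over all unit vectors orthogonal to $y$. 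First I would substitute $x = q y + \sqrt{1-|q|^2}\, t$ and expand by linearity of $A$ and of the inner product to obtain
\[
\langle Ax, y\rangle = q\,\langle Ay, y\rangle + \sqrt{1-|q|^2}\,\langle At, y\rangle,
\]
which proves the inclusion $\subseteq$.

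For the reverse inclusion I would check that the construction is invertible: given any unit vectors $y, t$ with $\langle t, y\rangle = 0$, the vector $x := q y + \sqrt{1-|q|^2}\, t$ satisfies $\|x\|^2 = |q|^2 + (1-|q|^2) = 1$ and $\langle x, y\rangle = q$, so the corresponding value $q\langle Ay,y\rangle + \sqrt{1-|q|^2}\langle At,y\rangle$ genuinely belongs to $\mathcal{W}_q(A)$. This settles the set equality, and taking the supremum of $|w|$ over $w \in \mathcal{W}_q(A)$ then recasts $\omega_q(A)$ as the supremum of
\[
\left| q\,\langle Ay, y\rangle + \sqrt{1-|q|^2}\,\langle At, y\rangle \right|
\]
over all admissible $y, t$.

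It remains to replace this by the claimed supremum of $|q|\,|\langle Ay,y\rangle| + \sqrt{1-|q|^2}\,|\langle At,y\rangle|$. The triangle inequality gives one direction immediately. The reverse direction is the only genuinely delicate point, and here I would exploit the phase freedom in $t$: replacing $t$ by $e^{i\phi} t$ keeps it a unit vector orthogonal to $y$ and multiplies $\langle At, y\rangle$ by $e^{i\phi}$, while leaving $|\langle At, y\rangle|$ unchanged. For any fixed feasible pair $(y,t)$, choosing $\phi$ so that $e^{i\phi}\langle At, y\rangle$ and $q\langle Ay, y\rangle$ share the same argument turns the triangle inequality into an equality,
\[
\left| q\,\langle Ay, y\rangle + \sqrt{1-|q|^2}\,e^{i\phi}\langle At, y\rangle \right| = |q|\,|\langle Ay, y\rangle| + \sqrt{1-|q|^2}\,|\langle At, y\rangle|,
\]
and since $(y, e^{i\phi}t)$ is again feasible, the supremum of the modulus dominates the right-hand side for every $(y,t)$. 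Combined with the triangle-inequality bound, the two suprema coincide, which completes the proof.
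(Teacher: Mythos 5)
Your proposal is correct: the decomposition $x = qy + \sqrt{1-|q|^2}\,t$ with $t \perp y$, the verification that the map $(y,t) \mapsto x$ is onto the admissible pairs, and the use of the phase freedom $t \mapsto e^{i\phi}t$ to turn the triangle inequality into an equality together establish both claims, and the only degenerate point (when $|q| = 1$, so $z = 0$ and $t$ is not determined by $x$) is harmless since the paper assumes $\dim \H \geqslant 2$, so a unit $t \perp y$ always exists and the $t$-term vanishes anyway. The paper itself does not prove this lemma but defers to \cite[Lemma~3.1]{q1}, and your argument is precisely the standard one underlying that reference, so there is nothing to add.
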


For other properties and more details on the $q$-numerical range and $q$-numerical radius, we refer the reader to \cite{Chien2, Chien3, ChienNakazato, Li1, Chien1, Goncalves, Huang, Li2, Li3, Psarrakos, Rajic, MA, Janfada}. Finally, if $a, b \in \H$, then we denote by $a \otimes b$ the $\BH$ operator of rank at most one defined by
\[
    (a\otimes b)x = \langle x,a \rangle\, b.
\]
For an operator of the form $a \otimes b$, we have $(a\otimes b)^* = b \otimes a$, $\|a\otimes b\| = \|a\|\|b\|$, and $\wq(a\otimes b) = \wq(b\otimes a)$ for all $q \in [0,1]$. Based on \cite[Theorem 1]{MF}, we have
\begin{equation}\label{w1}
	\omega(a\otimes b) = \frac{\|a\| \, \|b\|+|\langle a,b\rangle|}{2}.
\end{equation}
 From \eqref{w1}, it is immediate that the inequality
\begin{equation}\label{eq:buzano_ineq}
	\abs{\scal{a}{x}\scal{x}{b}}\leqslant \frac{\|a\| \, \|b\|+|\langle a,b\rangle|}{2}\norm{x}^2
\end{equation}
holds for all $a, b, x \in\mathcal{H}$. Inequality \eqref{eq:buzano_ineq} is known as the Buzano inequality (see \cite{Buzano}) and it is a generalization of the Cauchy--Schwarz inequality. The original proof is somewhat difficult as it requires some facts about the orthogonal decomposition of a complete inner product space. A simpler proof can be found in \cite{MF}. For some refinements of the Buzano inequality in recent years, see \cite{BotConde, Dragomir, KitZam}.

The main purpose of this paper is to derive the analogous formulae to \eqref{w1} and \eqref{eq:buzano_ineq}, for $\wq(a\otimes b)$ for all $q\in[0,1]$. Finally, before we start with the main results, we mention that a part of this manuscript has already appeared inside the doctoral dissertation of one of the authors in \cite{StDis}.

\section{Main results}

We start with the following theorem, which is the main result of the paper and also presents a basis for further developments.
\begin{theorem}\label{osnovna}
	Let $a,b\in\H$ and $q\in[0,1]$. Then we have 
    \begin{equation}\label{glavnaformula}\wq(a\otimes b)=\frac{\|a\| \, \|b\|+q|\langle a,b\rangle|}{2}+\frac{\sqrt{1-q^2}}{2} \, \sqrt{\|a\|^2\|b\|^2-|\langle a,b\rangle|^2}.
	\end{equation}
\end{theorem}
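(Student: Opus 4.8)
The plan is to start from the reformulation \eqref{q_num_radius_redef} and to strip off the two suprema, over $t$ and over $y$, one at a time. With $A=a\otimes b$ one has $\langle Ay,y\rangle=\langle y,a\rangle\langle b,y\rangle$ and $\langle At,y\rangle=\langle t,a\rangle\langle b,y\rangle$, so that
\[
\omega_q(a\otimes b)=\sup\Big\{\,|\langle b,y\rangle|\big(q|\langle a,y\rangle|+\sqrt{1-q^2}\,|\langle a,t\rangle|\big):\ \|y\|=\|t\|=1,\ t\perp y\,\Big\}.
\]
For a fixed unit vector $y$ only the factor $|\langle a,t\rangle|$ depends on $t$, and maximizing it over unit $t\perp y$ yields the norm of the component of $a$ orthogonal to $y$, namely $\sqrt{\|a\|^2-|\langle a,y\rangle|^2}$ (attained, when $a$ is not parallel to $y$, by the normalized orthogonal component of $a$, using $\dim\H\geqslant 2$). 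This leaves a supremum over a single unit vector,
\[
\omega_q(a\otimes b)=\sup_{\|y\|=1}|\langle b,y\rangle|\Big(q|\langle a,y\rangle|+\sqrt{1-q^2}\,\sqrt{\|a\|^2-|\langle a,y\rangle|^2}\Big).
\]

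Next I would normalize the two scalar quantities this expression depends on. Assuming $a,b\neq 0$ (the degenerate case being trivial), set $\hat a=a/\|a\|$, $\xi=|\langle \hat a,y\rangle|\in[0,1]$, and $c_0=|\langle a,b\rangle|/(\|a\|\|b\|)\in[0,1]$. The crucial step is the sharp bound
\[
\frac{|\langle b,y\rangle|}{\|b\|}\leqslant c_0\,\xi+\sqrt{1-c_0^2}\,\sqrt{1-\xi^2},\qquad \|y\|=1,
\]
valid for \emph{every} unit $y\in\H$. I would prove it by writing $b=\langle b,\hat a\rangle\hat a+b_\perp$ with $b_\perp\perp\hat a$ and $\|b_\perp\|=\|b\|\sqrt{1-c_0^2}$, expanding $\langle b,y\rangle=\langle b,\hat a\rangle\langle\hat a,y\rangle+\langle b_\perp,y\rangle$, and applying the triangle and Cauchy--Schwarz inequalities together with $|\langle\hat a,y\rangle|=\xi$ and $\|y-\langle y,\hat a\rangle\hat a\|=\sqrt{1-\xi^2}$. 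Since the remaining factor equals $\|a\|\big(q\xi+\sqrt{1-q^2}\sqrt{1-\xi^2}\big)\geqslant 0$, this bound can be inserted into the product, reducing everything to the one-variable estimate
\[
\omega_q(a\otimes b)\leqslant \|a\|\|b\|\,\max_{\xi\in[0,1]}h(\xi),\qquad h(\xi)=\big(c_0\xi+\sqrt{1-c_0^2}\sqrt{1-\xi^2}\big)\big(q\xi+\sqrt{1-q^2}\sqrt{1-\xi^2}\big).
\]

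Finally I would evaluate this scalar maximum by the substitution $\xi=\cos\phi$, $c_0=\cos\alpha$, $q=\cos\gamma$ with $\phi,\alpha,\gamma\in[0,\pi/2]$, turning the two factors into $\cos(\phi-\alpha)$ and $\cos(\phi-\gamma)$. The product-to-sum identity gives $h=\tfrac12\big(\cos(\alpha-\gamma)+\cos(2\phi-\alpha-\gamma)\big)$, whose maximum over $\phi$ is $\tfrac12\big(1+\cos(\alpha-\gamma)\big)$, attained at the admissible value $\phi=(\alpha+\gamma)/2$. Equality in the chain above is realized by taking $y$ in the span of $a$ and $b$ with $|\langle\hat a,y\rangle|=\cos\phi$ and phases aligned so that the bound on $|\langle b,y\rangle|$ is tight, together with the stage-one choice of $t$; hence the supremum is attained. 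Substituting back $\cos(\alpha-\gamma)=qc_0+\sqrt{1-q^2}\sqrt{1-c_0^2}$ and multiplying by $\|a\|\|b\|$, using $\|a\|\|b\|c_0=|\langle a,b\rangle|$ and $\|a\|\|b\|\sqrt{1-c_0^2}=\sqrt{\|a\|^2\|b\|^2-|\langle a,b\rangle|^2}$, yields exactly \eqref{glavnaformula}.

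I expect the main obstacle to be the passage from the abstract supremum over $\H$ to the scalar problem: one must establish the bound on $|\langle b,y\rangle|$ in a form that is simultaneously valid for \emph{all} unit $y$ (not merely those in the span of $a$ and $b$) and attainable, and verify that it is compatible with the stage-one choice of $t$, so that both optimizations are realized by a single pair $(y,t)$; the nonnegativity of the $q$-factor is precisely what lets the product bound go through. The concluding trigonometric optimization is then routine. I would also record separately the degenerate cases $a=0$, $b=0$, and $a\parallel b$ (where $c_0=1$), checking that \eqref{glavnaformula} reduces correctly in each, and note that setting $q=1$ recovers \eqref{w1}.
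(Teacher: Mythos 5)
Your proposal is correct and follows essentially the same route as the paper's proof: the reformulation \eqref{q_num_radius_redef}, an orthogonal decomposition to get the Cauchy--Schwarz bound on $|\langle b,y\rangle|$, the trigonometric substitution with the product-to-sum identity maximized at the midpoint angle, and an explicit phase-aligned attaining pair $(y,t)$ in the span of $a$ and the orthogonal component of $b$. The only cosmetic difference is that you evaluate the inner supremum over $t$ exactly as $\sqrt{\|a\|^2-|\langle a,y\rangle|^2}$ for each fixed $y$, whereas the paper derives the same quantity as an upper bound from the constraint $\langle t,y\rangle=0$ and verifies attainment jointly at the end.
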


\begin{proof}
	If $a = 0$ or $b = 0$, the theorem holds trivially. 
    
    Now, let $a,b\in\H\setminus\{0\}$ and assume that $\|a\|=\|b\|=1$. From the equality 
	\[
	   \langle (a\otimes b)x, \, y \rangle=\langle x,a\rangle\,\langle b,y\rangle
	\]
	for any $x,y\in\H$ and from \eqref{q_num_radius_redef}, we have
	{\small
    \begin{align}
		\omega_q(a \otimes b)
		&=\sup\Big\{q\,\big|\langle (a\otimes b)y,y\rangle\big|+\sqrt{1-q^2}\,\big|\langle (a\otimes b)t,y\rangle\big| \, \colon \langle t,y\rangle=0,\;\|t\|=\|y\|=1\Big\}\notag\\
		\label{reprezentacija} &= \sup\Big\{q\,\big|\langle a,y\rangle\,\langle y,b\rangle\big|+\sqrt{1-q^2}\,\big|\langle t,a\rangle\,\langle y,b\rangle\big| \, \colon \langle t,y\rangle=0,\;\|t\|=\|y\|=1\Big\}.
	\end{align}}Since $\H=\mathrm{span}\{a\}\oplus\{a\}^{\perp}$, each vector $t,y,b\in\H$ can be represented as
	\begin{align}
		\label{t} t &= \langle t,a\rangle\,a+t_1,\\[1mm]
		\label{y} y &= \langle y,a\rangle\,a+y_1,\\[1mm]
		\label{b} b &= \langle b,a\rangle\,a+b_1,
	\end{align}
	where $t_1,y_1,b_1\in\{a\}^{\perp}$ are uniquely determined. Now, from \eqref{t}--\eqref{b} and $\|a\|=1$, it follows that
	\begin{align}
		\label{norm1} \|t_1\| &= \sqrt{1-|\langle a,t\rangle|^2},\\[1mm]
		\label{norm2} \|y_1\| &= \sqrt{1-|\langle a,y\rangle|^2},\\[1mm]
		\label{norm3} \|b_1\| &= \sqrt{1-|\langle a,b\rangle|^2}.
	\end{align}
	By substituting \eqref{t} and \eqref{y} into $0=\langle t,y\rangle$ and using $\langle a,y_1\rangle=0$ and $\langle a,t_1\rangle=0$, we obtain
	\begin{equation}\label{aux_1}
	0=\langle a,t\rangle\,\langle a,y\rangle+\langle t_1,y_1\rangle.
	\end{equation}
	From \eqref{norm1}, \eqref{norm2}, \eqref{aux_1} and the Cauchy--Schwarz inequality, we have
	\begin{equation*}
		\begin{split}
			|\langle a,t\rangle\,\langle y,a\rangle|
			&=|\langle t_1,y_1\rangle|\\[1mm]
			&\leqslant \|t_1\|\,\|y_1\|\\[1mm]
			&=\sqrt{1-|\langle a,t\rangle|^2}\,\sqrt{1-|\langle a,y\rangle|^2}\\[1mm]
			&=\sqrt{1-|\langle a,t\rangle|^2-|\langle a,y\rangle|^2+|\langle a,t\rangle|^2\,|\langle a,y\rangle|^2}.
		\end{split}
	\end{equation*}
	By squaring the last inequality, we get
	\begin{equation}\label{aux_2}
	   |\langle a,t\rangle|\leqslant \sqrt{1-|\langle a,y\rangle|^2}.
	\end{equation}
	From \eqref{reprezentacija} and \eqref{aux_2}, it follows that
	\[
	   \omega_q(a\otimes b)\leqslant \sup\Big\{q \, |\langle a,y\rangle\,\langle y,b\rangle|+\sqrt{1-q^2}\,\sqrt{1-|\langle a,y\rangle|^2} \, |\langle y,b\rangle| \, \colon \|y\|=1\Big\}.
	\]
	
	Similarly, we obtain the equality
	\begin{equation}\label{aux_3}
	   \langle y,b\rangle=\langle y,a\rangle\,\langle a,b\rangle+\langle y_1,b_1\rangle .
	\end{equation}
	From \eqref{norm2}, \eqref{norm3}, \eqref{aux_3} and the Cauchy--Schwarz inequality, it follows that
	\begin{equation}\label{trogaoiks}
		\begin{split}
			|\langle y,b\rangle|
			&=|\langle y,a\rangle\,\langle a,b\rangle+\langle y_1,b_1\rangle|\\[1mm]
			&\leqslant |\langle y,a\rangle\,\langle a,b\rangle|+|\langle y_1,b_1\rangle|\\[1mm]
			&\leqslant |\langle y,a\rangle\,\langle a,b\rangle|+\|y_1\|\,\|b_1\|\\[1mm]
			&= |\langle y,a\rangle\,\langle a,b\rangle|+ \sqrt{(1-|\langle a,y\rangle|^2)(1-|\langle a,b\rangle|^2)}.
		\end{split}
	\end{equation}
	From this, we obtain
	\begin{equation}\label{nej}
		\omega_q(a\otimes b)\leqslant \sup_{\|y\|=1}I(y),
	\end{equation}
	where we define, for each $y\in\H$,
	{\small\[
	   I(y)=\Bigl(q \, |\langle a,y\rangle|+\sqrt{1-q^2} \, \sqrt{1-|\langle a,y\rangle|^2}\Bigr) \Bigl(|\langle y,a\rangle\,\langle a,b\rangle|+\sqrt{(1-|\langle a,y\rangle|^2)(1-|\langle a,b\rangle|^2)}\Bigr).
	\]}Let $y\in\H$ be an arbitrary unit vector, and let $\alpha,\beta,\gamma\in\left[0,\frac{\pi}{2}\right]$ be such that
	\[
	   q=\cos\alpha,\quad |\langle y,a\rangle|=\cos\beta \quad \text{and}\quad |\langle a,b\rangle|=\cos\gamma.
	\]
	Then, using this notation, we have
	\begin{equation}\label{adicione}
		\begin{split}
			I(y)
			&=(\cos\alpha\cos\beta+\sin\alpha\sin\beta)(\cos\beta\cos\gamma+\sin\beta\sin\gamma)\\[1mm]
			&=\cos(\alpha-\beta)\cos(\beta-\gamma)\\[1mm]
			&=\frac{1}{2}\cos(\alpha-\gamma)+\frac{1}{2}\cos(\alpha+\gamma-2\beta)\\[1mm]
			&\leqslant \frac{1}{2}+\frac{1}{2}\cos(\alpha-\gamma)\\[1mm]
			&=\frac{1}{2}+\frac{1}{2}\Bigl(\cos\alpha\cos\gamma+\sin\alpha\sin\gamma\Bigr)\\[1mm]
			&=\frac{1}{2}+\frac{1}{2}\Bigl(q \, |\langle a,b\rangle|+\sqrt{(1-q^2)(1-|\langle a,b\rangle|^2)}\Bigr).
		\end{split}
	\end{equation}
	Therefore, from \eqref{nej} and \eqref{adicione}, we obtain the estimate
	\begin{align}\label{nejednakost}
		\omega_q(a\otimes b)\leqslant \frac{1}{2}+\frac{1}{2}\Bigl(q \, |\langle a,b\rangle|+\sqrt{(1-q^2)(1-|\langle a,b\rangle|^2)}\Bigr).
	\end{align}

	We now prove that equality actually holds in \eqref{nej}. First, consider the case when $a$ and $b$ are linearly dependent. Then, it is easy to see that  $\omega_q(a\otimes b)=\omega_q(a\otimes a)$. Since $a\otimes a$ is a projection operator, \cite[Theorem 5.7]{q1} immediately implies that the desired equality holds. 
    
    Now assume that $a$ and $b$ are linearly independent. Then, $b_1 \neq 0$, so we can define the vectors
	\[
	   y=\cos\frac{\alpha+\gamma}{2}\,a + \sin\frac{\alpha + \gamma}{2}\,e^{i \, \arg \langle a, b \rangle}\,\frac{b_1}{\|b_1\|}
	\]
    and
    \[
        t = \sin\frac{\alpha+\gamma}{2}\, a - \cos\frac{\alpha + \gamma}{2}\,e^{i \, \arg \langle a, b \rangle}\,\frac{b_1}{\|b_1\|} ,
    \]
	where we take the argument of the complex number zero to be zero. For this choice of vector \(y\in\H\), we have
	\[
	   y_1=\sin\frac{\alpha+\gamma}{2}\,e^{i \, \arg \langle a, b \rangle}\,\frac{b_1}{\|b_1\|} .
	\]
	Since $\sin \frac{\alpha + \gamma}{2} > 0$, we also obtain
	\begin{align}
        \label{p1} \mathrm{arg}\langle y_1,b_1\rangle &= \mathrm{arg}\langle a,b\rangle,\\
        \label{p2} y_1 &\in \mathrm{span}\{b_1\},\\
        \label{beta} \beta &=\frac{\alpha+\gamma}{2}.
	\end{align}
	To attain equality in \eqref{nej}, it suffices that equality holds in \eqref{aux_2}, \eqref{trogaoiks} and \eqref{adicione}. We trivially observe that equality holds in \eqref{aux_2}. By \eqref{p1} and \eqref{p2}, equality holds in \eqref{trogaoiks}, while due to \eqref{beta}, equality also holds in \eqref{adicione}. Thus, equality holds in \eqref{nejednakost}. Therefore, we have
	\[
		\omega_q(a\otimes b)=\frac{1}{2}+\frac{1}{2}\Bigl(q\,|\langle a,b\rangle|+\sqrt{\left(1-q^2\right)\left(1-|\langle a,b\rangle|^2\right)}\Bigr) ,
	\]
    provided $\|a\|=\|b\|=1$.
    
    In the general case, from the homogeneity of \(\omega_q\), we obtain
    {\small
	\begin{align*}\pushQED{\qed}
		\omega_q(a\otimes b) &=\|a\| \, \|b\| \, \omega_q\Bigl(\Bigl(\frac{a}{\|a\|}\Bigr)\otimes\Bigl(\frac{b}{\|b\|}\Bigr)\Bigr)\\[1mm]
		&=\|a\| \, \|b\| \left[\frac{1}{2}+\frac{1}{2}\left(q \, \Bigl|\Bigl\langle\frac{a}{\|a\|},\frac{b}{\|b\|}\Bigr\rangle\Bigr|+\sqrt{\Bigl(1-q^2\Bigr)\Bigl(1-\Bigl|\Bigl\langle\frac{a}{\|a\|},\frac{b}{\|b\|}\Bigr\rangle\Bigr|^2\Bigr)}\,\right)\right]\\[1mm]
		&=\frac{\|a\| \, \|b\|+q\,|\langle a,b\rangle|}{2}+\frac{\sqrt{1-q^2}}{2} \, \sqrt{\|a\|^2 \, \|b\|^2-|\langle a,b\rangle|^2}. \qedhere
	\end{align*}}
\end{proof}

When we set $q=1$ in Theorem \ref{osnovna}, we obtain \eqref{w1}. Moreover, Theorem \ref{osnovna} generalizes the Buzano inequality from \cite{Buzano} as follows.

\begin{corollary}\label{buz_cor}
For any vectors $a, b, x, y \in \H$ and $q \in [0,1]$ such that $\norm{x}=\norm{y}=1$ and $\scal{x}{y}=q$, we have
\begin{equation}\label{genbuz}
		|\scal{a}{x}\scal{b}{y}| \leqslant \frac{\norm{a}\norm{b}+q \, |\langle a,b\rangle|}{2}+\frac{\sqrt{1-q^2}}{2} \, \sqrt{\norm{a}^2\norm{b}^2-|\langle a,b\rangle|^2} .
\end{equation}
\end{corollary}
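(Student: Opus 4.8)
The plan is to recognize that the right-hand side of \eqref{genbuz} is exactly $\wq(a\otimes b)$, as supplied by \cref{osnovna} through formula \eqref{glavnaformula}, and that the left-hand side is the modulus of a particular element of the $q$-numerical range $\mathcal{W}_q(a\otimes b)$. Since $\wq(a\otimes b)=\sup_{w\in\mathcal{W}_q(a\otimes b)}\abs{w}$ by definition, the inequality will then be immediate. The whole corollary is thus a direct reading of the main theorem through the defining property of the $q$-numerical range, and essentially no new analysis is required.

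First I would take the given unit vectors $x,y\in\H$ with $\scal{x}{y}=q$ and compute $\scal{(a\otimes b)x}{y}$. Using the identity already recorded in the proof of \cref{osnovna}, namely $\scal{(a\otimes b)x}{y}=\scal{x}{a}\scal{b}{y}$, and observing that $\norm{x}=\norm{y}=1$ together with $\scal{x}{y}=q$ are precisely the membership conditions in the definition of $\mathcal{W}_q$, I conclude that $\scal{x}{a}\scal{b}{y}\in\mathcal{W}_q(a\otimes b)$.

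Next, passing to moduli and invoking the definition of the $q$-numerical radius as a supremum, I obtain
\[
    \abs{\scal{x}{a}\scal{b}{y}}=\abs{\scal{(a\otimes b)x}{y}}\leqslant \wq(a\otimes b).
\]
Since $\abs{\scal{x}{a}}=\abs{\scal{a}{x}}$, the left-hand side equals $\abs{\scal{a}{x}\scal{b}{y}}$, and substituting the explicit value of $\wq(a\otimes b)$ from \eqref{glavnaformula} reproduces exactly the right-hand side of \eqref{genbuz}, completing the argument.

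The only point demanding any attention — and the closest thing to an obstacle in an otherwise routine deduction — is the bookkeeping of complex conjugates: one must check that $\scal{(a\otimes b)x}{y}$ indeed reproduces $\scal{a}{x}\scal{b}{y}$ up to a conjugation that is harmless once the modulus is taken. I would also note that no hypotheses on $a$ and $b$ are needed beyond their lying in $\H$, because the constraints $\norm{x}=\norm{y}=1$ and $\scal{x}{y}=q$ alone guarantee that the relevant scalar sits inside $\mathcal{W}_q(a\otimes b)$.
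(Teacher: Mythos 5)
Your proposal is correct and is precisely the argument the paper intends: since $\scal{(a\otimes b)x}{y}=\scal{x}{a}\scal{b}{y}$ and the hypotheses $\norm{x}=\norm{y}=1$, $\scal{x}{y}=q$ place this scalar in $\mathcal{W}_q(a\otimes b)$, the bound $\abs{\scal{a}{x}\scal{b}{y}}\leqslant\wq(a\otimes b)$ combined with \eqref{glavnaformula} gives \eqref{genbuz} at once. The paper states \cref{buz_cor} without a written proof exactly because it is this immediate deduction from \cref{osnovna}, and your care about the conjugate (harmless after taking moduli) is the only detail worth recording.
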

\begin{remark}
For $q = 1$, we get $y= \lambda x$ for some $\lambda\in\C$ with $|\lambda|=1$, yielding the inequality
\[
    |\scal{a}{x}\scal{x}{b}|\leqslant\frac{\norm{a}\norm{b}+|\langle a,b\rangle|}{2},
\]
which is equivalent to \eqref{eq:buzano_ineq}.
Thus, \eqref{genbuz} represents a generalization of the classical Buzano inequality.
\end{remark}

For any $q \in [0, 1]$ and $T\in\BH$, we trivially observe that $\omega_q(T)\leqslant \norm{T}$. Therefore, provided $T \neq O$, there is a unique $\lambda_q(T) \in [0,1]$ such that $\omega_q(T) = \lambda_q(T) \norm{T}$. The following corollary gives an explicit formula for such a $\lambda_q(T)$.

\begin{corollary}\label{lambda_cor}
Let $a,b\in\H\setminus\{0\}$ and $q\in[0,1]$. Then
\begin{equation}\label{lambda_q}
	\lambda_q(a\otimes b) = \begin{cases}
	    \dfrac{1+\cos\left(\arccos q-\arctan\sqrt{\left(\dfrac{\norm{a}\norm{b}}{|\scal{a}{b}|}\right)^2-1}\right)}{2}, & \scal{a}{b}\neq 0,\\
        \dfrac{1+\sqrt{1-q^2}}{2}, & \scal{a}{b}=0 .
	\end{cases}
\end{equation}
\end{corollary}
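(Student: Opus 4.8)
The plan is to read off the value of $\lambda_q(a\otimes b)$ directly from \cref{osnovna} and then recast the resulting closed form into the two branches of \eqref{lambda_q}. Since $a,b\neq 0$, the operator norm $\|a\otimes b\|=\|a\|\|b\|$ is nonzero, so dividing the formula \eqref{glavnaformula} by it and invoking $\omega_q(a\otimes b)=\lambda_q(a\otimes b)\,\|a\|\|b\|$ yields
\[
    \lambda_q(a\otimes b)=\frac{1}{2}\left(1+q\,\frac{|\langle a,b\rangle|}{\|a\|\|b\|}+\sqrt{1-q^2}\,\sqrt{1-\frac{|\langle a,b\rangle|^2}{\|a\|^2\|b\|^2}}\,\right).
\]
I would abbreviate $c=\dfrac{|\langle a,b\rangle|}{\|a\|\|b\|}$, which by the Cauchy--Schwarz inequality lies in $[0,1]$, with $c=0$ exactly when $\langle a,b\rangle=0$. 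In that orthogonal case the expression collapses to $\tfrac{1}{2}\bigl(1+\sqrt{1-q^2}\bigr)$, which is precisely the second branch of \eqref{lambda_q}, so this case is immediate.

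For $\langle a,b\rangle\neq 0$, equivalently $c\in(0,1]$, I would introduce the angle substitution $q=\cos\alpha$ and $c=\cos\gamma$ with $\alpha\in[0,\tfrac{\pi}{2}]$ and $\gamma\in[0,\tfrac{\pi}{2})$; this is permissible exactly because $q\in[0,1]$ and $c\in(0,1]$. The cross term then becomes $qc+\sqrt{1-q^2}\sqrt{1-c^2}=\cos\alpha\cos\gamma+\sin\alpha\sin\gamma=\cos(\alpha-\gamma)$ by the angle-subtraction identity, giving $\lambda_q(a\otimes b)=\tfrac{1}{2}\bigl(1+\cos(\alpha-\gamma)\bigr)$ with $\alpha=\arccos q$. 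It then remains only to rewrite $\gamma=\arccos c$ using arctangent: since $\gamma\in[0,\tfrac{\pi}{2})$ one has $\gamma=\arctan\dfrac{\sin\gamma}{\cos\gamma}=\arctan\dfrac{\sqrt{1-c^2}}{c}=\arctan\sqrt{\tfrac{1}{c^2}-1}$, and substituting $\tfrac{1}{c}=\tfrac{\|a\|\|b\|}{|\langle a,b\rangle|}$ produces the first branch of \eqref{lambda_q}.

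The computation is routine throughout, so I do not expect a genuine obstacle; the only point demanding a little care is the arctangent rewriting in the final step, where the hypothesis $\langle a,b\rangle\neq 0$ (equivalently $c>0$, i.e.\ $\gamma\neq\tfrac{\pi}{2}$) is exactly what keeps $\arctan\dfrac{\sqrt{1-c^2}}{c}$ finite and equal to $\arccos c$. This is also the structural reason the statement must split into two cases.
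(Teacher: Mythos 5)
Your proposal is correct and follows essentially the same route as the paper: both read $\lambda_q(a\otimes b)$ off from \cref{osnovna} and rewrite the cross term trigonometrically, the only cosmetic difference being that the paper invokes the harmonic-addition identity $a\cos x+b\sin x=\sqrt{a^2+b^2}\,\cos\bigl(x-\arctan(b/a)\bigr)$ on the unnormalized quantities, while you normalize first and derive the same identity inline via the substitutions $q=\cos\alpha$, $c=\cos\gamma$ and the angle-subtraction formula, then convert $\arccos c$ to the arctangent form. Your explicit attention to why $\scal{a}{b}\neq 0$ (i.e.\ $\gamma\neq\tfrac{\pi}{2}$) is needed for the arctangent rewriting matches the implicit role of the hypothesis $a>0$ in the paper's cited identity.
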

\begin{proof}
Observe that \eqref{lambda_q} directly follows from \eqref{glavnaformula} for the case $\scal{a}{b}=0$.

Hence, we assume that $\scal{a}{b}\neq 0$. Let $q=\cos\theta$ for some $\theta\in[0, \frac{\pi}{2}]$. Then, \eqref{glavnaformula} can be written as
	\begin{equation*}
		\omega_q(a\otimes b)=\dfrac{1}{2}\left(\norm{a}\norm{b}+\abs{\scal{a}{b}} \cos\theta + \sqrt{\norm{a}^2\norm{b}^2-\abs{\scal{a}{b}}^2} \, \sin\theta \right) .
	\end{equation*}
	Using the identity
	\begin{equation*}
		a\cos x+b\sin x=\sqrt{a^2+b^2} \, \cos\left(x-\arctan\left( \frac{b}{a} \right)\right)
	\end{equation*}
    that holds for any $a > 0$ and $b \geqslant 0$, we get
	\begin{align*}
		\omega_q(a\otimes b)&=\dfrac{1}{2}\left(\norm{a}\norm{b}+\norm{a}\norm{b}\cos\left(\theta-\arctan\frac{\sqrt{\norm{a}^2\norm{b}^2-\abs{\scal{a}{b}}^2}}{|\scal{a}{b}|}\right)\right),
	\end{align*}
	which directly implies \eqref{lambda_q}.
\end{proof}

\section{Some related results}

In this section, we give some additional results related to Theorem \ref{osnovna} and Corollaries \ref{buz_cor} and \ref{lambda_cor}. We start with the next two propositions on the $q$-numerical radii of operator matrices of rank at most one.

\begin{proposition}\label{related_prop_1}
	Let $a,b\in\H$ and $q\in[0,1]$. Then we have
    \[
        \wq\left(\begin{bmatrix}
		a\otimes b & 0 \\
		0 & 0
	\end{bmatrix}\right) = \wq(a\otimes b).
    \]
\end{proposition}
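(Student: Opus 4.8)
The plan is to show that the operator matrix $M = \begin{bmatrix} a\otimes b & 0 \\ 0 & 0 \end{bmatrix}$ on $\H \oplus \H$ is itself a rank-one operator of the form $\tilde a \otimes \tilde b$ with $\|\tilde a\| = \|a\|$, $\|\tilde b\| = \|b\|$, and $\langle \tilde a, \tilde b\rangle = \langle a, b\rangle$, and then invoke the formula (5.2) from Theorem (the main formula) directly.

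Key point: I would embed $a, b \in \H$ as $\tilde a = \begin{bmatrix} a \\ 0 \end{bmatrix}$ and $\tilde b = \begin{bmatrix} b \\ 0 \end{bmatrix}$ in $\H \oplus \H$, and verify that $\tilde a \otimes \tilde b = M$. Indeed, for any $\begin{bmatrix} x \\ z \end{bmatrix} \in \H \oplus \H$,
\[
    (\tilde a \otimes \tilde b)\begin{bmatrix} x \\ z \end{bmatrix} = \left\langle \begin{bmatrix} x \\ z \end{bmatrix}, \tilde a\right\rangle \tilde b = \langle x, a\rangle \begin{bmatrix} b \\ 0 \end{bmatrix} = \begin{bmatrix} \langle x,a\rangle\, b \\ 0 \end{bmatrix} = M\begin{bmatrix} x \\ z\end{bmatrix}.
\]
So $M = \tilde a \otimes \tilde b$ exactly.

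Then I would simply compute the three scalar quantities that enter the formula: the inner product on $\H \oplus \H$ gives $\langle \tilde a, \tilde b\rangle = \langle a, b\rangle$, and the norms satisfy $\|\tilde a\| = \|a\|$, $\|\tilde b\| = \|b\|$. Substituting into \eqref{glavnaformula} applied in the space $\H \oplus \H$ yields
\[
    \wq(M) = \frac{\|a\|\|b\| + q|\langle a,b\rangle|}{2} + \frac{\sqrt{1-q^2}}{2}\sqrt{\|a\|^2\|b\|^2 - |\langle a,b\rangle|^2} = \wq(a\otimes b),
\]
where the last equality is again \eqref{glavnaformula}, now applied in $\H$. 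This finishes the argument.

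I expect essentially no serious obstacle here; the only point requiring care is to confirm that Theorem \ref{osnovna} is applicable in the space $\H \oplus \H$ (which is a Hilbert space of dimension at least two, as required) rather than in $\H$, and that the identification $M = \tilde a \otimes \tilde b$ is literally correct with the paper's convention $(a\otimes b)x = \langle x, a\rangle\, b$. If one prefers to avoid invoking the formula twice, one can phrase the whole proof as a single sentence: $M$ is unitarily equivalent to $a\otimes b$ acting on the first summand with the second summand contributing nothing, so the $q$-numerical radii coincide; but the cleanest route is the direct rank-one identification described above.
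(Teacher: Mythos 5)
Your proof is correct and coincides with the paper's own argument: the paper likewise verifies the identification $\begin{bmatrix} a\otimes b & 0 \\ 0 & 0 \end{bmatrix} = \begin{bmatrix} a \\ 0 \end{bmatrix}\otimes\begin{bmatrix} b \\ 0 \end{bmatrix}$ on $\H\oplus\H$ and then applies Theorem~\ref{osnovna} in $\H\oplus\H$, using $\left\|\begin{bmatrix} a \\ 0 \end{bmatrix}\right\|=\|a\|$, $\left\|\begin{bmatrix} b \\ 0 \end{bmatrix}\right\|=\|b\|$ and $\left\langle \begin{bmatrix} a \\ 0 \end{bmatrix},\begin{bmatrix} b \\ 0 \end{bmatrix}\right\rangle=\langle a,b\rangle$ exactly as you do. Your attention to the conventions (the paper's definition $(a\otimes b)x=\langle x,a\rangle\,b$ and the applicability of the formula in the larger space) is well placed but raises no issue.
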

\begin{proof}
	For any $\begin{bmatrix}
		x\\
		y 
	\end{bmatrix}\in\H\oplus\H$, we have
    \[
    \begin{bmatrix}
		a\otimes b & 0 \\
		0 & 0
	\end{bmatrix}\begin{bmatrix}
		x\\ y 
	\end{bmatrix}=\begin{bmatrix}
		(a\otimes b)x\\ 0 
	\end{bmatrix}=\begin{bmatrix}
		\scal{x}{a}b\\ 0 
	\end{bmatrix}=\left\langle\begin{bmatrix}
			x  \\
			y
	\end{bmatrix}, \begin{bmatrix}
			a  \\
			0 
	\end{bmatrix}\right\rangle \begin{bmatrix}
		b  \\
		0 
	\end{bmatrix}=\left(\begin{bmatrix}
		a  \\
		0 
	\end{bmatrix}\otimes\begin{bmatrix}
		b  \\
		0 
	\end{bmatrix}\right)\begin{bmatrix}
		x  \\
		y 
	\end{bmatrix}.
    \]
	Therefore, the operator $$\begin{bmatrix}
		a\otimes b & 0 \\
		0 & 0
	\end{bmatrix}\in\mathcalb{B}(\H\oplus\H)$$ is of rank at most one. Theorem~\ref{osnovna} now implies
    \begin{align*}
        \pushQED{\qed}
        \wq\left(\begin{bmatrix}
				a\otimes b & 0 \\
				0 & 0
	\end{bmatrix}\right)&=\wq\left(\begin{bmatrix}
				a  \\
				0 
			\end{bmatrix}\otimes\begin{bmatrix}
				b  \\
				0 
			\end{bmatrix}\right)\\
			&=\frac{1}{2}\left(\left\|\begin{bmatrix}
				a  \\
				0 
			\end{bmatrix}\right\| \, \left\|\begin{bmatrix}
				b  \\
				0 
			\end{bmatrix}\right\|+q\left|\scal{\begin{bmatrix}
					a  \\
					0 
			\end{bmatrix}}{\begin{bmatrix}
					b  \\
					0 
			\end{bmatrix}}\right|\right)\\
            &\quad + \frac{\sqrt{1-q^2}}{2} \sqrt{\left\|\begin{bmatrix}
					a  \\
					0 
				\end{bmatrix}\right\|^2\left\|\begin{bmatrix}
					b  \\
					0 
				\end{bmatrix}\right\|^2-\left|\left\langle \begin{bmatrix}
					a  \\
					0 
				\end{bmatrix},\begin{bmatrix}
					b \\
					0 
				\end{bmatrix}\right\rangle\right|^2}\\
			&=\frac{\|a\|\|b\|+q|\langle a,b\rangle|}{2}+\frac{\sqrt{1-q^2}}{2} \sqrt{\|a\|^2\|b\|^2-|\langle a,b\rangle|^2}\\
            &=\wq(a\otimes b) \qedhere.
    \end{align*}
\end{proof}
\begin{remark}
    Proposition \ref{related_prop_1} does not hold for arbitrary operators, as noted in \cite[Corollary~5.8]{q1}.
\end{remark}

\begin{proposition}\label{related_prop_2}
	Let $a, b \in \mathcal{H}$ and $q \in [0,1]$. Then we have
	\[
        \wq\left(\begin{bmatrix}
		0 & a\otimes b \\
		0 & 0
	\end{bmatrix}\right)=\frac{1+\sqrt{1-q^2}}{2}\|a\| \|b\|.
    \]
\end{proposition}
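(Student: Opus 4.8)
The plan is to mirror the argument of Proposition \ref{related_prop_1}: first recognize that the off-diagonal block matrix is itself a rank-one operator on $\H \oplus \H$, and then apply Theorem \ref{osnovna}. Writing $T = \begin{bmatrix} 0 & a\otimes b \\ 0 & 0\end{bmatrix}$, I would compute its action on an arbitrary vector $\begin{bmatrix} x \\ y\end{bmatrix} \in \H \oplus \H$, obtaining $T\begin{bmatrix} x \\ y\end{bmatrix} = \begin{bmatrix} \langle y, a\rangle b \\ 0\end{bmatrix}$. The key manipulation is to rewrite the scalar $\langle y, a\rangle$ as the inner product $\langle \begin{bmatrix} x \\ y\end{bmatrix}, \begin{bmatrix} 0 \\ a\end{bmatrix}\rangle$ taken in $\H \oplus \H$, which exhibits $T$ as the rank-one operator $\begin{bmatrix} 0 \\ a\end{bmatrix} \otimes \begin{bmatrix} b \\ 0\end{bmatrix}$.

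Once $T$ is in this form, the computation is immediate. Setting $A = \begin{bmatrix} 0 \\ a\end{bmatrix}$ and $B = \begin{bmatrix} b \\ 0\end{bmatrix}$, I would record $\|A\| = \|a\|$, $\|B\| = \|b\|$, and — crucially — $\langle A, B\rangle = 0$, since the nonzero coordinates of $A$ and $B$ sit in orthogonal summands of $\H \oplus \H$. Substituting these into formula \eqref{glavnaformula} of Theorem \ref{osnovna} collapses the $q\,|\langle A, B\rangle|$ term to zero and reduces $\sqrt{\|A\|^2\|B\|^2 - |\langle A, B\rangle|^2}$ to $\|a\|\|b\|$, yielding $\wq(T) = \frac{\|a\|\|b\|}{2} + \frac{\sqrt{1-q^2}}{2}\|a\|\|b\| = \frac{1 + \sqrt{1-q^2}}{2}\|a\|\|b\|$, as claimed.

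There is essentially no obstacle here beyond bookkeeping: the entire content lies in the orthogonality $\langle A, B\rangle = 0$, which is precisely what distinguishes this off-diagonal case from the diagonal case of Proposition \ref{related_prop_1}, where the two vectors retained their original inner product $\langle a, b\rangle$. The only point I would check carefully is the placement of $a$ and $b$ in the correct summands: because $a \otimes b$ reads off the second coordinate and writes into the first, the vector $a$ lands in the lower slot while $b$ lands in the upper slot, and it is exactly this mismatch that forces their inner product to vanish.
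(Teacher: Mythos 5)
Your proposal is correct and follows exactly the paper's own argument: you rewrite the block operator as the rank-one operator $\begin{bmatrix} 0 \\ a\end{bmatrix}\otimes\begin{bmatrix} b \\ 0\end{bmatrix}$ on $\H\oplus\H$ and apply Theorem~\ref{osnovna}, with the orthogonality $\left\langle \begin{bmatrix} 0 \\ a\end{bmatrix}, \begin{bmatrix} b \\ 0\end{bmatrix}\right\rangle = 0$ killing the $q$-term, precisely as in the paper. Your closing observation about the coordinate mismatch being the essential difference from Proposition~\ref{related_prop_1} is an accurate reading of what drives the result.
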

\begin{proof}
	For any $\begin{bmatrix}
		x  \\
		y 
	\end{bmatrix}\in\H\oplus\H$, we have $$\begin{bmatrix}
		0 & a\otimes b \\
		0 & 0
	\end{bmatrix}\begin{bmatrix}
		x  \\
		y 
	\end{bmatrix}=\begin{bmatrix}
		(a\otimes b)y  \\
		0 
	\end{bmatrix}=\begin{bmatrix}
		\scal{y}{a}b  \\
		0 
	\end{bmatrix}=\scal{\begin{bmatrix}
			x  \\
			y
	\end{bmatrix}}{\begin{bmatrix}
			0  \\
			a 
	\end{bmatrix}}\begin{bmatrix}
		b  \\
		0 
	\end{bmatrix}=\left(\begin{bmatrix}
		0  \\
		a 
	\end{bmatrix}\otimes\begin{bmatrix}
		b  \\
		0 
	\end{bmatrix}\right)\begin{bmatrix}
		x  \\
		y 
	\end{bmatrix},$$
	Therefore, the operator $$\begin{bmatrix}
		0 & a\otimes b \\
		0 & 0
	\end{bmatrix}\in\mathcalb{B}(\H\oplus\H)$$is of rank at most one. Using Theorem \ref{osnovna}, we obtain
    \begin{align*}
    \pushQED{\qed}
    \wq\left(\begin{bmatrix}
            0 & a\otimes b \\
            0 & 0
        \end{bmatrix}\right)&=\wq\left(\begin{bmatrix}
            0  \\
            a 
        \end{bmatrix}\otimes\begin{bmatrix}
            b  \\
            0 
        \end{bmatrix}\right)\\
        &=\frac{1}{2}\left(\left\|\begin{bmatrix}
            0  \\
            a 
        \end{bmatrix}\right\| \, \left\|\begin{bmatrix}
            b  \\
            0 
        \end{bmatrix}\right\|+q\left|\scal{\begin{bmatrix}
                0  \\
                a 
        \end{bmatrix}}{\begin{bmatrix}
                b  \\
                0 
        \end{bmatrix}}\right|\right)\\
        &\qquad + \frac{\sqrt{1-q^2}}{2} \sqrt{\left\|\begin{bmatrix}
                0  \\
                a
                \end{bmatrix}\right\|^2\left\|\begin{bmatrix}
                b  \\
                0 
            \end{bmatrix}\right\|^2-\left|\left\langle \begin{bmatrix}
                0  \\
                a 
            \end{bmatrix},\begin{bmatrix}
                b \\
                0 
            \end{bmatrix}\right\rangle\right|^2}\\
        &=\frac{\|a\|\|b\|+ 0q}{2}+\frac{\sqrt{1-q^2}}{2} \sqrt{\|a\|^2\|b\|^2-0^2}\\
        &=\frac{1+\sqrt{1-q^2}}{2}\|a\|\|b\|. \qedhere
    \end{align*}
\end{proof}
\begin{remark}
    Proposition \ref{related_prop_2} can also be proved by using Corollary \ref{lambda_cor}.
\end{remark}

We proceed with the following result on the functional properties of $q \mapsto \omega_q(A)$ for a given rank-one operator $A\in\BH$.

\begin{theorem}
	Let $a, b \in \H\setminus\{0\}$ and let the function $f:[0,1]\to\R$ is defined by $$f(q) = \omega_q(a\otimes b)$$ for $q \in [0,1]$. Then we have the following:
	\begin{enumerate}[label=\textbf{(\roman*)}]
		\item $f$ is concave. In particular, if $a$ and $b$ are linearly independent, then $f$ is strictly concave.
        
		\item $f$ attains its maximum value uniquely at $\dfrac{\abs{\scal{a}{b}}}{\norm{a}\norm{b}}$, with $f\left( \dfrac{\abs{\scal{a}{b}}}{\norm{a}\norm{b}} \right) =  \|a\| \, \|b\|$.

		\item $f$ attains its minimum value at either $0$ or $1$ (or both).
		\item $f([0,1])=\left[\dfrac{\norm{a}\norm{b}+\min\left\{\abs{\scal{a}{b}}, \sqrt{\norm{a}^2\norm{b}^2-\abs{\scal{a}{b}}^2} \right\}}{2}, \norm{a}\norm{b}\right]$ .
	\end{enumerate}
\end{theorem}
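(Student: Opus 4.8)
The plan is to work directly from the closed-form expression \eqref{glavnaformula} in Theorem \ref{osnovna}. Writing $p = \|a\|\,\|b\|$, $c = |\langle a,b\rangle|$ and $s = \sqrt{\|a\|^2\|b\|^2 - |\langle a,b\rangle|^2} = \sqrt{p^2 - c^2}$, the formula becomes
\[
f(q) = \frac{p}{2} + \frac12\Bigl(c\,q + s\sqrt{1-q^2}\Bigr), \qquad q\in[0,1].
\]
By the Cauchy--Schwarz inequality $0 \le c \le p$, so $s \ge 0$; moreover $s > 0$ precisely when $a,b$ are linearly independent (strict Cauchy--Schwarz), while $s = 0$ forces $c = p$. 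Every assertion reduces to an elementary analysis of this one-variable function.

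For (i) I would note that $f$ is an affine image of $q \mapsto c\,q + s\sqrt{1-q^2}$. The map $q\mapsto \sqrt{1-q^2}$ is strictly concave on $[-1,1]$, since its second derivative equals $-(1-q^2)^{-3/2} < 0$ on the interior and it is continuous up to the endpoints, whereas $q \mapsto c\,q$ is affine. Hence $f$ is concave for every $s \ge 0$, and strictly concave as soon as $s > 0$, i.e.\ when $a,b$ are linearly independent. Continuity of $f$ at $q=1$ upgrades concavity from the open interval to the closed interval $[0,1]$.

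For (ii) I would locate the maximizer by setting $f'(q) = \tfrac12\bigl(c - s q(1-q^2)^{-1/2}\bigr) = 0$, which yields $q^\ast = c/p$ after squaring, and a direct substitution gives $f(q^\ast) = \tfrac{p}{2} + \tfrac12\cdot\tfrac{c^2+s^2}{p} = p = \|a\|\,\|b\|$. Uniqueness is immediate from strict concavity in the linearly independent case; in the linearly dependent case $s=0$, so $f(q) = \tfrac{p}{2}(1+q)$ is affine and strictly increasing, attaining its maximum uniquely at $q=1 = c/p$, consistent with the stated formula. I regard this bookkeeping over the degenerate ($s=0$, hence $q^\ast=1$) and orthogonal ($c=0$, hence $q^\ast = 0$) boundary cases as the only delicate point, since there the critical-point computation must be reconciled with boundary extrema.

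Finally, (iii) follows from concavity alone: for $q\in[0,1]$ concavity gives $f(q) \ge (1-q)f(0) + q f(1) \ge \min\{f(0),f(1)\}$, so the minimum is attained at an endpoint. For (iv) I would invoke continuity of $f$ together with the intermediate value theorem to conclude that $f([0,1])$ is the closed interval $[\min\{f(0),f(1)\},\, \max f]$. Evaluating the endpoints gives $f(0) = \tfrac{p+s}{2}$ and $f(1) = \tfrac{p+c}{2}$, whence $\min\{f(0),f(1)\} = \tfrac{p + \min\{c,s\}}{2}$, while $\max f = p$ by (ii); substituting back the definitions of $p,c,s$ yields exactly the claimed range.
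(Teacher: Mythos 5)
Your proposal is correct and takes essentially the same approach as the paper: both analyze the closed-form expression of Theorem \ref{osnovna} by elementary one-variable calculus, obtaining concavity from the second derivative (you via strict concavity of $q \mapsto \sqrt{1-q^2}$, the paper via an explicit computation of $f''$), locating the maximizer at $\abs{\scal{a}{b}}/(\norm{a}\norm{b})$ with value $\norm{a}\norm{b}$, and deducing (iii) and (iv) from concavity, continuity, and the endpoint values $f(0)$ and $f(1)$. Your explicit bookkeeping of the degenerate cases $\scal{a}{b}=0$ and $a,b$ linearly dependent is slightly more careful than the paper's sign analysis of $f'$, but it does not constitute a different method.
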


\begin{proof}\phantom{}
	\begin{enumerate}[label=\textbf{(\roman*)}]
		\item By direct computation, Theorem \ref{osnovna} implies
		\begin{equation}\label{aux_4}
			f'(q)=\dfrac{|\scal{a}{b}|}{2}-\dfrac{q}{2\sqrt{1-q^2}}\sqrt{\norm{a}^2\norm{b}^2-\abs{\scal{a}{b}}^2}, \quad q\in(0,1),
		\end{equation}
		and
		\begin{equation}\label{aux_5}
			f''(q)=-\dfrac{1}{2(1-q^2)\sqrt{1-q^2}}\sqrt{\norm{a}^2\norm{b}^2-\abs{\scal{a}{b}}^2}\leqslant 0, \quad q\in(0,1).
		\end{equation}
		Therefore, $f$ is concave on $[0, 1]$. Moreover, if $a$ and $b$ are linearly independent, then the inequality in \eqref{aux_5} is strict, hence $f$ is strictly concave.

		\item Let $q_0=\dfrac{\abs{\scal{a}{b}}}{\norm{a}\norm{b}}$. From \eqref{aux_4}, a routine computation shows that $f'(q) > 0$ for any $q \in (0, q_0)$, and $f'(q) < 0$ for any $q \in (q_0, 1)$. The function $f$ thus attains its maximum value uniquely at $q_0$. Moreover, we have
		\begin{align*}
			f\left(q_0\right)
			&=\frac{\|a\|\|b\|+\frac{\abs{\scal{a}{b}}}{\norm{a}\norm{b}}|\langle a,b\rangle|}{2}+\frac{\sqrt{1-\left(\frac{\abs{\scal{a}{b}}}{\norm{a}\norm{b}}\right)^2}}{2} \sqrt{\|a\|^2\|b\|^2-|\langle a,b\rangle|^2}\\
			&=\dfrac{\norm{a}^2\norm{b}^2+\abs{\scal{a}{b}}^2}{2\norm{a}\norm{b}}+\dfrac{\norm{a}^2\norm{b}^2-\abs{\scal{a}{b}}^2}{2\norm{a}\norm{b}} =\norm{a}\norm{b}.
		\end{align*}

		\item This follows from the continuity of $f$ and part \textbf{(i)}.
        \pushQED{\qed}
		\item This follows from parts \textbf{(ii)} and \textbf{(iii)}.\qedhere
	\end{enumerate}
\end{proof}

For a Hilbert space $\H$ with two inner products $\scal{\cdot}{\cdot}_1$ and $\scal{\cdot}{\cdot}_2$, we say that $\scal{\cdot}{\cdot}_1 \preccurlyeq \scal{\cdot}{\cdot}_2$ if $\norm{x}_1\leqslant\norm{x}_2$ for all $x\in\H$. The following theorem investigates the monotonicity property of the $q$-numerical radii with respect to different inner products.

\begin{theorem}
Let $\H$ be a Hilbert space with the inner products $\scal{\cdot}{\cdot}_1$ and $\scal{\cdot}{\cdot}_2$. Also, let $a, b\in\H$ and $q\in[0,1]$, and let $\omega_q^1(\cdot)$ and $\omega_q^2(\cdot)$ denote the $q$-numerical radii corresponding to $\scal{\cdot}{\cdot}_1$ and $\scal{\cdot}{\cdot}_2$, respectively. If $$\scal{\cdot}{\cdot}_1 \preccurlyeq \scal{\cdot}{\cdot}_2\,\,\,\text{and}\,\,\,|\scal{a}{b}_1| \leqslant |\scal{a}{b}_2|,$$ then
\begin{equation}\label{aux_6}
	\omega_q^1(a\otimes b)\leqslant \omega_q^2(a\otimes b).
\end{equation}
\end{theorem}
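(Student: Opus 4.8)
The plan is to reduce the inequality to the explicit formula \eqref{glavnaformula} and then isolate the two scalar facts that make it work. For $i\in\{1,2\}$ I would set
\[
    C_i=|\langle a,b\rangle_i|,\qquad S_i=\sqrt{\|a\|_i^2\,\|b\|_i^2-|\langle a,b\rangle_i|^2},
\]
so that $\|a\|_i\,\|b\|_i=\sqrt{C_i^2+S_i^2}$ and, by Theorem \ref{osnovna},
\[
    2\,\omega_q^i(a\otimes b)=\sqrt{C_i^2+S_i^2}+q\,C_i+\sqrt{1-q^2}\,S_i=:G(C_i,S_i).
\]
With this notation the desired inequality \eqref{aux_6} is exactly $G(C_1,S_1)\leqslant G(C_2,S_2)$.

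The easy ingredient is that $G$ is coordinatewise non-decreasing on the nonnegative quadrant: since $q\geqslant 0$, $\sqrt{1-q^2}\geqslant 0$, and $\sqrt{C^2+S^2}$ is non-decreasing in each of $C\geqslant 0$ and $S\geqslant 0$, every summand of $G$ is non-decreasing in $C$ and in $S$. Hence it suffices to verify the two coordinatewise bounds $C_1\leqslant C_2$ and $S_1\leqslant S_2$, after which $G(C_1,S_1)\leqslant G(C_2,S_1)\leqslant G(C_2,S_2)$ finishes the proof. The first bound, $C_1\leqslant C_2$, is precisely the hypothesis $|\langle a,b\rangle_1|\leqslant|\langle a,b\rangle_2|$.

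The substance of the argument is the second bound $S_1\leqslant S_2$, which I would phrase through Gram determinants. Put $Q:=\langle\cdot,\cdot\rangle_2-\langle\cdot,\cdot\rangle_1$; the assumption $\langle\cdot,\cdot\rangle_1\preccurlyeq\langle\cdot,\cdot\rangle_2$ says exactly that $Q(x,x)=\|x\|_2^2-\|x\|_1^2\geqslant 0$ for every $x$, so $Q$ is a positive semidefinite Hermitian form. Writing $\Gamma_i$ for the Gram matrix of the pair $\{a,b\}$ with respect to $\langle\cdot,\cdot\rangle_i$, we have $S_i^2=\det\Gamma_i$ and $\Gamma_2=\Gamma_1+R$, where $R$ is the Gram matrix of $\{a,b\}$ relative to $Q$ and is therefore positive semidefinite. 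Thus $0\preceq\Gamma_1\preceq\Gamma_2$, and the monotonicity of the determinant on positive semidefinite matrices (elementary in the $2\times 2$ case) yields $\det\Gamma_1\leqslant\det\Gamma_2$, that is $S_1\leqslant S_2$.

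I expect the main obstacle to be recognizing that the tempting "separate" estimate — bounding $\|a\|\,\|b\|+q|\langle a,b\rangle|$ and the square-root term individually and monotonically in the three scalars $\|a\|_i,\|b\|_i,|\langle a,b\rangle_i|$ — does not work, because $\omega_q$ need not increase when only $|\langle a,b\rangle|$ grows (enlarging the inner product can shrink the square-root term). The correct variables are $C$ and $S$, in which $G$ is genuinely coordinatewise monotone; and the delicate inequality $S_1\leqslant S_2$ rests on the global positive semidefiniteness of $Q$ via determinant monotonicity, and would fail under the weaker data $\|a\|_1\leqslant\|a\|_2$, $\|b\|_1\leqslant\|b\|_2$, $C_1\leqslant C_2$ alone. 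The linearly dependent and zero cases need no separate treatment: there $S_1=S_2=0$ and the argument degenerates harmlessly.
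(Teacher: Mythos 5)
Your proof is correct, but the crux is handled differently from the paper. Both arguments share the same skeleton: by Theorem~\ref{osnovna}, everything reduces to comparing the three quantities $\norm{a}_i\norm{b}_i$, $\abs{\scal{a}{b}_i}$ and $S_i=\sqrt{\norm{a}_i^2\norm{b}_i^2-\abs{\scal{a}{b}_i}^2}$, the first two comparisons being immediate from the hypotheses, and the delicate point being $S_1\leqslant S_2$ (which, as you correctly stress, does \emph{not} follow from the scalar bounds $\norm{a}_1\leqslant\norm{a}_2$, $\norm{b}_1\leqslant\norm{b}_2$, $\abs{\scal{a}{b}_1}\leqslant\abs{\scal{a}{b}_2}$ alone). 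The paper proves $S_1\leqslant S_2$ by the variational identity in \eqref{logicno}, namely $S_i=\norm{b}_i\inf_{\lambda\in\C}\norm{a-\lambda b}_i$, after which $\preccurlyeq$ gives termwise monotonicity of both the factor $\norm{b}_i$ and each competitor $\norm{a-\lambda b}_i$ inside the infimum; you instead read $S_i^2$ as the Gram determinant of $\{a,b\}$ and invoke monotonicity of the determinant under the Loewner order, using that $\scal{\cdot}{\cdot}_2-\scal{\cdot}{\cdot}_1$ is a positive semidefinite Hermitian form, so that $0\preceq\Gamma_1\preceq\Gamma_2$. Your route has two modest advantages: it needs no separate treatment of the degenerate cases (the paper's identity \eqref{logicno} divides by $\norm{b}_1^2$, so the paper must dispose of $a=0$ or $b=0$ first, while your $G(C,S)$ formulation degenerates harmlessly), and it would extend naturally to Gram determinants of more than two vectors; the paper's route is more self-contained, using only the distance-to-a-line formula and pointwise norm comparison, with no appeal to matrix order facts. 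One small packaging difference: you recover $\norm{a}_i\norm{b}_i$ as $\sqrt{C_i^2+S_i^2}$ and check coordinatewise monotonicity of $G$ (including the derivative-free boundary point $C=S=0$, which your quadrant-monotonicity phrasing covers), whereas the paper compares $\norm{a}_1\norm{b}_1\leqslant\norm{a}_2\norm{b}_2$ directly from $\preccurlyeq$; both are valid, and your closing remark about why the na\"ive three-scalar estimate fails is exactly the reason the paper introduces the infimum representation in the first place.
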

\begin{proof}
The result trivially holds if $a = 0$ or $b = 0$. 

Now, assume that $a \neq 0$ and $b \neq 0$. It is easy to see that
\begin{align}\label{logicno}
\begin{split}
    \norm{a}_1^2\norm{b}_1^2-\abs{\scal{a}{b}_1}^2 &= \norm{b}_1^2\left(\norm{a}_1^2-\frac{\abs{\scal{a}{b}_1}^2}{\norm{b}_1^2}\right)\\
    &= \norm{b}_1^2\left\|a-\frac{\scal{a}{b}_1}{\norm{b}_1^2} \, b\right\|_1^2\\
    &=\norm{b}_1^2\inf\limits_{\lambda\in\mathbb{C}}\norm{a-\lambda b}_{1}^2.
\end{split}
\end{align}
Then, from \eqref{glavnaformula} and \eqref{logicno}, we have
\begin{equation}\label{transformacija}
    \wq^1(a\otimes b)=\frac{\|a\|_1\|b\|_1+q|\langle a,b\rangle_1|}{2}+\frac{\sqrt{1-q^2}}{2}\norm{b}_1\inf\limits_{\lambda\in\mathbb{C}}\norm{a-\lambda b}_{1}.
\end{equation}
Now, using a similar procedure as in \eqref{logicno}, we obtain
\begin{equation}\label{transformacija1}
    \wq^2(a\otimes b)=\frac{\|a\|_2\|b\|_2+q|\langle a,b\rangle_2|}{2}+\frac{\sqrt{1-q^2}}{2}\norm{b}_2\inf\limits_{\lambda\in\mathbb{C}}\norm{a-\lambda b}_{2}.
\end{equation}
Finally, from \eqref{transformacija} and \eqref{transformacija1}, we get \eqref{aux_6}.
\end{proof}

We end the paper by analyzing the \(q\)-numerical radius of functions of operators whose rank is at most one. More precisely, we present the corresponding upper bound for \(\omega_q(f(a\otimes b))\), where $f$ is an analytic function. In the following theorem, we take $0^0$ to be equal to $1$.

\begin{theorem}\label{last_th}
	Let $a,b\in\H$ and $q\in[0,1]$. Also, let $R> \norm{a} \norm{b}$, and let $f \colon D(0, R)\to\C$ be a function defined by
    \[
        f(\lambda) = \sum_{k=0}^{\infty}\alpha_k\lambda^k \quad \text{for all $\lambda\in D(0,R)$},
    \]
    where $D(0,R)$ is the open disk of radius $R$ centered at $0$ in the complex plane. Then 
	\begin{equation}\label{eq:function_q}
	    \wq (f(a\otimes b))\leqslant \wq (a \otimes b)\left|\sum\limits_{k=1}^{\infty} \alpha_k\scal{b}{a}^{k-1}\right| + q \, |\alpha_0|.
	\end{equation}
\end{theorem}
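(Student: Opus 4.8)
The plan is to exploit the fact that every power of a rank-one operator is again a scalar multiple of that same operator, which collapses the analytic function $f(a\otimes b)$ into an affine expression in $a\otimes b$ and reduces \eqref{eq:function_q} to a one-line application of \cref{elementary_lemma}.

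First I would record the power identity. Starting from $(a\otimes b)x=\scal{x}{a}b$, a short induction gives
\[
    (a\otimes b)^k=\scal{b}{a}^{k-1}(a\otimes b),\qquad k\geqslant 1,
\]
since $(a\otimes b)^2 x=\scal{x}{a}\scal{b}{a}b=\scal{b}{a}(a\otimes b)x$ and the higher powers follow by iteration. Next I would handle convergence: because $\norm{a\otimes b}=\norm{a}\norm{b}<R$, the operator series $\sum_{k=0}^{\infty}\alpha_k(a\otimes b)^k$ converges in operator norm, and since $\abs{\scal{b}{a}}\leqslant\norm{a}\norm{b}<R$ the scalar series $\sum_{k=1}^{\infty}\alpha_k\scal{b}{a}^{k-1}$ converges absolutely. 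Substituting the power identity and isolating the $k=0$ term then yields
\[
    f(a\otimes b)=\alpha_0 I+\beta\,(a\otimes b),\qquad \beta:=\sum_{k=1}^{\infty}\alpha_k\scal{b}{a}^{k-1}.
\]

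With $f(a\otimes b)$ in this affine form, I would apply the subadditivity and homogeneity of the $q$-numerical radius (\cref{elementary_lemma}) to obtain
\[
    \wq(f(a\otimes b))\leqslant\abs{\alpha_0}\,\wq(I)+\abs{\beta}\,\wq(a\otimes b).
\]
It then remains only to identify $\wq(I)$. For the identity operator the defining constraint $\scal{x}{y}=q$ forces $\scal{Ix}{y}=\scal{x}{y}=q$, so $\mathcal{W}_q(I)=\{q\}$ and hence $\wq(I)=\abs{q}=q$. Substituting this value and the definition of $\beta$ reproduces exactly the claimed bound \eqref{eq:function_q}.

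I do not anticipate a serious obstacle, as the whole argument hinges on the collapse identity for powers of $a\otimes b$. The only points deserving care are the norm-convergence of the operator series (which is precisely why the hypothesis $R>\norm{a}\norm{b}$ is imposed) and the evaluation $\wq(I)=q$ rather than $\norm{I}=1$; it is exactly this latter value that produces the factor $q$ in front of $\abs{\alpha_0}$ in \eqref{eq:function_q}.
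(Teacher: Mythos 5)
Your proposal is correct, and its core is the same as the paper's: both arguments rest on the collapse identity $(a\otimes b)^k=\scal{b}{a}^{k-1}(a\otimes b)$, which reduces $f(a\otimes b)$ to the affine form $\alpha_0 I+\beta\,(a\otimes b)$ with $\beta=\sum_{k=1}^{\infty}\alpha_k\scal{b}{a}^{k-1}$. The paper reaches this form through the partial sums $p_n(a\otimes b)$ and norm convergence, which you in fact justify slightly more explicitly via $\norm{a\otimes b}=\norm{a}\norm{b}<R$ and $\abs{\scal{b}{a}}\leqslant\norm{a}\norm{b}<R$. The one genuine difference is the final step. You finish with subadditivity and absolute homogeneity from \cref{elementary_lemma} together with the direct evaluation $\wq(I)=q$ (correct: $\mathcal{W}_q(I)=\{q\}$, which is nonempty under the paper's standing assumption $\dim\H\geqslant 2$), so your ending is self-contained and needs nothing beyond the introductory lemma. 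The paper instead invokes the set-level identity $\mathcal{W}_q(\alpha T+\beta I)=\alpha\,\mathcal{W}_q(T)+\beta q$ from \cite{GauWu}, obtaining the exact equality $\mathcal{W}_q(f(a\otimes b))=\beta\,\mathcal{W}_q(a\otimes b)+\alpha_0 q$ before any modulus is taken; this is marginally stronger, since it describes the entire $q$-numerical range of $f(a\otimes b)$ and would also yield the matching lower bound $\wq(f(a\otimes b))\geqslant\abs{\beta}\,\wq(a\otimes b)-q\,\abs{\alpha_0}$, information that your triangle-inequality ending discards. For the stated inequality \eqref{eq:function_q}, however, the two endings are interchangeable and equally rigorous.
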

\begin{proof}
	For every $n\in\N$, let $p_n$ be the polynomial
	\begin{equation*}
        p_n(\lambda) = \sum\limits_{k=0}^{n}\alpha_k\lambda^k .
	\end{equation*}
    Observe that the operator $p_n(a\otimes b)$ can be represented in the form
	\[
        p_n(a\otimes b)=\left(\sum\limits_{k=1}^{n}\alpha_k\scal{b}{a}^{k-1}\right)a\otimes b + \alpha_0I.
    \]
	Since the sequence $(p_n(a\otimes b))_{n\in\N}$ converges to $f(a\otimes b)$ in the norm topology of the space $\BH$,  it follows that
    \begin{equation}\label{eq:limit_f}
      f(a\otimes b)=\left(\sum\limits_{k=1}^{\infty}\alpha_k\scal{b}{a}^{k-1}\right)a\otimes b + \alpha_0I.  
    \end{equation}
    Also, recall that for any $T\in\BH$ and any $q\in\overline{\D}$, the equality 
	\begin{equation}\label{s_equality}
	    \mathcal{W}_q(\alpha T+ \beta I)=\alpha\mathcal{W}_q(T)+ \beta q
	\end{equation}
	holds; see \cite[Proposition 3.1]{GauWu}.
    From \eqref{eq:limit_f} and \eqref{s_equality}, it follows that
	\begin{equation*}
		\begin{split}
			\mathcal{W}_q(f(a\otimes b))&=\left(\sum\limits_{k=1}^{\infty}\alpha_k\scal{b}{a}^{k-1}\right)\mathcal{W}_q \left(a\otimes b\right)+ \alpha_0 q .
		\end{split}
	\end{equation*}
	We obtain \eqref{eq:function_q} directly from here.
\end{proof}

\section*{Declarations}

\noindent{\bf{Funding}}\\
\noindent This research was supported by the Ministry of Science, Technological Development and Innovation of the Republic of Serbia, through the grant numbers 451-03-66/2024-03/200104, 451-03-137/2025-03/200102 and 451-03-137/2025-03/200116. I.\ Damnja\-nović was also supported by the Science Fund of the Republic of Serbia, grant \#6767, Lazy walk counts and spectral radius of threshold graphs --- LZWK.
		
\vspace{0.5cm}
		
\noindent{\bf{Availability of data and materials}}\\
\noindent No data was used to support this study.

\vspace{0.5cm}

\noindent{\bf{Competing interests}}\\
\noindent The authors declare that they have no competing interests.

\vspace{0.5cm}

\noindent{\bf{Author contribution}}\\
\noindent The work presented here was carried out in collaboration between all the authors. All authors contributed equally and significantly in writing this article. All authors have contributed to the manuscript. All authors have read and agreed to the published version of the manuscript.


\begin{thebibliography}{99}
\footnotesize

\bibitem{BotConde}
T.\ Bottazzi and C.\ Conde, Generalized Buzano inequality, {\em Filomat} {\bf 37} (2023), 9377--9390, \doi{10.2298/FIL2327377B}.
	
\bibitem{Buzano}
M.~L.~Buzano, Generalizzazione della diseguaglianza di Cauchy--Schwarz (Italian), {\em Rend.\ Sem.\ Mat.\ Univ.\ e Politech.\ Torino} {\bf 31} (1974), 405--409.
			
\bibitem{Chien1}
M.-T.\ Chien and H.\ Nakazato, Davis–Wielandt shell and $q$-numerical range, {\em Linear Algebra Appl.\/}\ {\bf 340} (2002), 15--31, \doi{10.1016/S0024-3795(01)00395-0}.
			
\bibitem{Chien2}
M.-T.\ Chien and H.\ Nakazato, The boundary of the $q$-numerical range of a reducible matrix, {\em Linear Multilinear Algebra} {\bf 55} (2007), 275--292, \doi{10.1080/03081080600979205}.

\bibitem{ChienNakazato} 
M.-T.\ Chien and H.\ Nakazato, The $q$-numerical radius of weighted shift operators with periodic weights, {\em Linear Algebra Appl.\/}\ {\bf 422} (2007), 198--218, \doi{10.1016/j.laa.2006.09.017}.

\bibitem{Chien3}
M.-T.\ Chien, H.\ Nakazato and P.\ Psarrakos, On the $q$-numerical range of matrices and matrix polynomials, {\em Linear Multilinear Algebra} {\bf 53} (2005), 357--374, \doi{10.1080/03081080500167596}.
			
\bibitem{Dragomir}
S.~S.~Dragomir, Some refinements of Schwartz inequality, in:\ {\em Simpozionul de Matematici şi Aplicaţii}, Timişoara, Romania, 1985, pp.\ 13--16.

\bibitem{MF}
M.~Fujii and F.~Kubo, Buzano's inequality and bounds for roots of algebraic equations, {\em Proc.\ Am.\ Math.\ Soc.\/}\ {\bf 117} (1993), 359--361, \doi{10.2307/2159168}.

\bibitem{GauWu}
H.-L.~Gau and P.~Y.~Wu, \emph{Numerical Ranges of Hilbert Space Operators}, Cambridge University Press, 2021, \doi{10.1017/9781108782296}.
	
\bibitem{Goncalves}
M.~I.~Cardoso Gonçalves and A.\ R.\ Sourour, Isometries of a generalized numerical radius, {\em Linear Algebra Appl.\/}\ {\bf 429} (2008), 1478--1488, \doi{10.1016/j.laa.2008.04.007}.

\bibitem{Huang}
P.-R.~Huang and H.~Nakazato, The boundary of the $q$-numerical range of some Toeplitz nilpotent matrix, {\em Nihonkai Math.\ J.\/}\ {\bf 27} (2016), 155--165.

\bibitem{KitZam}
F.~Kittaneh and A.~Zamani, A refinement of $A$-Buzano inequality and applications to $A$-numerical radius inequalities, {\em Linear Algebra Appl.\/}\ {\bf 697} (2024), \doi{10.1016/j.laa.2023.02.020}.

\bibitem{Li3}
C.-K.~Li, $q$-numerical ranges of normal and convex matrices, {\em Linear Multilinear Algebra} {\bf 43} (1998), 377--384, \doi{10.1080/03081089808818538}.

\bibitem{Li1}
C.-K.~Li, P.~P.~Mehta and L.~Rodman, A generalized numerical range:\ The range of a constrained sesquilinear form, {\em Linear Multilinear Algebra} {\bf 37} (1994), 25--49, \doi{10.1080/03081089408818311}.

\bibitem{Li2}
C.-K.~Li and H.~Nakazato, Some results on the $q$-numerical, {\em Linear Multilinear Algebra} {\bf 43} (1998), 385--409, \doi{10.1080/03081089808818539}.

\bibitem{MA}
M.~Marcus and P.~Andresen, Constrained extrema of bilinear functionals, {\em Monatsh.\ Math.\/}\ {\bf 84} (1977), 219--235, \doi{10.1007/BF01538033}.

\bibitem{Janfada}
S.~F.~Moghaddam, A.~K.~Mirmostafaee and M.~Janfada, $q$-numerical radius inequalities for Hilbert space, {\em Linear Multilinear Algebra} {\bf 72} (2024), 751--763, \doi{10.1080/03081087.2022.2161460}.

\bibitem{Psarrakos}
P.~J.~Psarrakos and P.~M.~Vilamos, The $q$-numerical range of matrix polynomials, {\em Linear Multilinear Algebra} {\bf 47} (2000), 1--9, \doi{10.1080/03081080008818627}.

\bibitem{Rajic}
R.~Rajić, A generalized $q$-numerical range, {\em Math.\ Commun.\/}\ {\bf 10} (2005), 31--45.

\bibitem{StDis}
H.~Stankovi\' c, \emph{Subnormal operators:\ A multivariable operator theory perspective}, PhD Thesis, Faculty of Sciences and Mathematics, University of Ni\v s, 2024, \url{https://nardus.mpn.gov.rs/handle/123456789/22914}.

\bibitem{q1}
H.\ Stanković, M.\ Krstić and I.\ Damnjanović, Some properties of the $q$-numerical radius, {\em Linear Multilinear Algebra} (2024), \doi{10.1080/03081087.2024.2438927}.
\end{thebibliography}
\end{document}